\newtheorem{lemma}{Lemma}
\newtheorem{theorem}{Theorem}
\newtheorem{example}{Example}
\newtheorem{remark}{Remark}
\begin{document}
\title{Approximation by $(p,q)$-Baskakov-Beta Operators}
\author{}
\maketitle

\begin{center}
\textbf{Neha Malik and Vijay Gupta}\\[0pt]
Department of Mathematics, Netaji Subhas Institute of Technology\\[0pt]
Sector 3 Dwarka, New Delhi-110078, India\\[0pt]
neha.malik\_nm@yahoo.com\\[0pt]
vijaygupta2001@hotmail.com
\vskip0.3in
\end{center}

\vskip0.3in
\textbf{Abstract.} In the present paper, we consider $(p,q)$-analogue of the Baskakov-Beta operators and using it, we estimate some direct results on approximation. Also, we represent the convergence of these operators graphically using MATLAB.
\newline
\textbf{Key Words.} $(p,q)$-Beta function, $(p,q)$-Gamma function.
\newline
\textbf{AMS Subject Classification.} 33B15, 41A25.

\section{Introduction}
Approximation theory has been an engaging field of research with abstract approximation to the core (cf. \cite{ms}). Varied operators with their approximation properties, mainly the quantitative one, have been discussed and studied by many researchers. It has been seen that the generalizations of several well known operators to quantum-calculus ($q$-calculus) were introduced in the last three decades and their approximation behavior were also discussed (see \cite{avr}, \cite{amc}, \cite{ZFVG} and \cite{vgrpa}). Further generalization of quantum variant is the post-quantum calculus, denoted by $(p,q)$-calculus. Very recently, some researchers studied in this direction (see \cite{TA}, \cite{inf} and \cite{VSSY}). Few basic definitions and notations mentioned below may be found in these papers and references therein.\\
 The $(p,q)$-numbers are given by
\begin{eqnarray*}
\left[ n\right] _{p,q}&:=& p^{n-1}+p^{n-2}q+p^{n-3}q^2+\cdots +pq^{n-2}+q^{n-1}\\
&=& \left\{ \begin{array}{cc}
         \frac{p^{n}-q^{n}}{p-q}  \ , & \ \ \mbox{if $p\ne q\ne 1$};\\
        n \ , & \ \ \mbox{if $p= q= 1$}.\end{array} \right.
\end{eqnarray*}
The $\left( p,q\right)$-factorial is given by $\left[ n\right] _{p,q}!=\prod\limits_{r=1}^{n}\left[ r\right] _{p,q}, \ \ n\geqslant 1, \ \ \left[
0\right] _{p,q}!=1.$
The $\left( p,q\right) $-binomial coefficient satisfies%
\begin{equation*}
\left[
\begin{array}{c}
n \\
r
\end{array}%
\right] _{p,q}=\frac{\left[ n\right] _{p,q}!}{\left[ n-r\right] _{p,q}!\left[
r\right] _{p,q}!}, \ \ 0\leqslant r\leqslant n.
\end{equation*}

Let $n$ be a non-negative integer, the $(p,q)$-Gamma
function is defined as
\begin{equation*}
\Gamma _{p,q}\left( n+1\right) =\frac{(p\ominus q)_{p,q}^{n}}{(p-q)^{n}}=%
\left[ n\right] _{p,q}!, \  \ 0<q<p,
\end{equation*}
where $(p\ominus q)_{p,q}^{n}=(p-q)(p^{2}-q^{2})(p^{3}-q^{3})\cdots (p^{n}-q^{n}).$
\\
The $(p,q)$-integral for $0<q<p \leqslant 1$ (generalized Jackson integral) is defined as
 \begin{eqnarray} \label{int}
 \int\limits_{0}^{a} f(x) \, d_{p,q}x = (p-q)a \sum\limits_{i=0}^{\infty} \frac{q^i}{p^{i+1}} \ f\left( \frac{a \, q^i}{p^{i+1}} \right), \ \ x\in [0,a].
 \end{eqnarray}
By simple computation, we get $$ \int\limits_{0}^{a} x^{n} \ d_{p,q}x = \frac{a^{n+1}}{[n+1]_{p,q}} \cdot$$
Also, the integral (\ref{int}) includes the nodes $x_{i} = x_{i}(p,q) = \frac{a \, q^i}{p^{i+1}}, \ \ i=0,1,\ldots, $
geometrically distributed in $(0,+ \infty),$ not only in $(0,a),$ as in the case $p=1$ (standard Jackson's $q$-integral). Moreover, one may observe that only a finite number of nodes in (\ref{int}) are outside $(0,a)$, i.e., those $x_i$ for which $q^i > p^{i+1}.$ Thus, the above definition of $(p,q)$-integral may be well utilized to define the $(p,q)$-extensions of well-known results.\\
For $m,n\in \mathbb{N},$ the $\left( p,q\right) $-Beta function of second kind considered in \cite{ag} is given by
\begin{eqnarray*}
B_{p,q}(m,n) = \int\limits_{0}^{\infty} \frac{t^{m-1}}{(1 \oplus pt)_{p,q}^{m+n}} \ d_{p,q}t,
\end{eqnarray*}
where the $(p,q)$-power basis is given by
\begin{eqnarray*}
(1 \oplus pt)_{p,q}^{m+n} &=& (1+pt)(p+pqt)(p^{2}+pq^{2}t)\cdots (p^{m+n-1}+pq^{m+n-1}t).
\end{eqnarray*}

Using the $(p,q)$-integration by parts: $$ \int\limits_{a}^{b} f(px) \ D_{p,q}g(x) \ d_{p,q}x = f(b) \, g(b) - f(a) \, g(a) - \int\limits_{a}^{b} g(qx) \ D_{p,q}f(x) \ d_{p,q}x ,$$
it was shown in \cite{ag} that the following relation is satisfied by the $(p,q)$-analogues of Beta and Gamma functions:
\begin{eqnarray*}
B_{p,q}(m,n) = \frac{q \ \ \Gamma _{p,q}(m) \, \Gamma _{p,q}(n)}{\left( p^{m+1} \ q^{m-1} \right)^{m/2} \ \Gamma _{p,q}(m+n)} \cdot \label{a4}
\end{eqnarray*}
As a special case, if $p=q=1,$ $B(m,n)=\Gamma(m) \ \Gamma(n) / \Gamma(m+n).$ It may be observed that in $(p,q)$-setting, order is important, which is the reason why $(p,q)$-variant of Beta function does not satisfy commutativity property, i.e., $B_{p,q}(m,n) \ne B_{p,q}(n,m).$ \\

For $n\in \mathbb{N},$ $x\in [0,\infty)$ and $0<q<p\leqslant 1,$ the $(p,q)$-analogue of Baskakov operators can be defined as
\begin{eqnarray*}
B_{n,p,q}\left( f,x\right) =\sum\limits_{k=0}^{\infty}b_{n,k}^{p,q}(x)f\left(\frac{%
p^{n-1}[k]_{p,q}}{q^{k-1}[n]_{p,q}}\right) ,
\end{eqnarray*}
where $(p,q)$-Baskakov basis function is given by
\begin{equation*}
b_{n,k}^{p,q}(x)=\left[
\begin{array}{c}
n+k-1 \\
k%
\end{array}%
\right] _{p,q}p^{k+n(n-1)/2} q^{k(k-1)/2}\frac{x^{k}}{(1\oplus x)_{p,q}^{n+k}%
} \cdot
\end{equation*}
Gupta \cite{inf} considered this form of $(p,q)$-Baskakov operators while studying its Kantorovich variant. This form was also considered by T. Acar et. al. in \cite{kant}.

\begin{remark} \label{r1}
It has been observed in \cite{inf} that the $(p,q)$-Baskakov operators satisfy the following recurrence relation:
\begin{eqnarray*}
\left[ n\right]
_{p,q}T_{n,m+1}^{p,q}(qx) &=& q \, p^{n-1} \ x \, (1+px) \ D_{p,q}[T_{n,m}^{p,q}(x)]
+\left[ n\right]
_{p,q} q \, x \, T_{n,m}^{p,q}(qx),
\end{eqnarray*}
where $T_{n,m}^{p,q}(x):=
B_{n,p,q}\left( e_m,x\right) = \sum\limits_{k=0}^{\infty}b_{n,k}^{p,q}(x)\left(\frac{%
p^{n-1}[k]_{p,q}}{q^{k-1}[n]_{p,q}}\right)^{m}.$ \\
Then, we have
\begin{eqnarray*}
B_{n,p,q}\left(e_{0},x\right)=1, \ \
B_{n,p,q}\left(e_{1},x\right)=x,
\end{eqnarray*}
\begin{eqnarray*}
B_{n,p,q}\left(e_{2},x\right)=\frac{[n+1]_{p,q} \ x^{2}+ p^{n-1} \, q \, x}{q \, [n]_{p,q}},
\end{eqnarray*}
where $e_{i}(t)=t^{i},$ $i=0,1,2.$
In case $p=1$, we get the $q$-Baskakov operators \cite{av-qBask}, \cite{ZFVG}. If $p=q=1,$ then these operators reduce to the well known Baskakov operators.
\end{remark}

\section{Construction of Operators and Moments}
In the year 1985, Sahai-Prasad \cite{sahai} introduced the Durrmeyer variant of the well known Baskakov operators. However, there were some technical problems in the main estimates of \cite{sahai}, which were later improved by Sinha et. al. \cite{sinha}. In this continuation, in 1994, Gupta proposed yet another Durrmeyer type generalization of Baskakov operators by taking the weights of Beta basis function. The operators discussed in \cite{ATTA} provide better approximation in simultaneous approximation than the usual Baskakov-Durrmeyer operators, studied in \cite{sinha}. This motivated us to study further in this direction and here, we propose the $(p,q)$-variant of Baskakov-Beta operators. \\

For $n\in \mathbb{N},$ $x\in [0,\infty)$ and $0<q<p\leqslant 1,$ the $(p,q)$-Baskakov-Beta operators are defined by:
\begin{eqnarray} \label{operatorBB}
D_{n}^{p,q}(f,x) = \sum\limits_{k=0}^{\infty} b_{n,k}^{p,q}(x) \ \frac{1}{B_{p,q}(k+1,n)} \int\limits_{0}^{\infty} \frac{t^{k}}{(1 \oplus pt)_{p,q}^{n+k+1}} \ f(q^{2}p^{n+k}t) \ d_{p,q}t ,
\end{eqnarray}
where $b_{n,k}^{p,q}(x) = \left[
\begin{array}{c}
n+k-1 \\
k
\end{array}
\right] _{p,q} p^{k+n(n-1)/2} \ q^{k(k-1)/2} \ \frac{x^{k}}{(1 \oplus x)_{p,q}^{n+k}} \cdot$

In the present article, we estimate the moments of these operators by using $(p,q)$-Beta functions and establish some direct results in terms of modulus of continuity of first and second order using K-functional. Finally, we provide weighted approximation estimate alongwith the rate of convergence.

\begin{lemma} \label{mom}
The following equalities hold:
\begin{enumerate}
  \item $D_{n}^{p,q}(1,x)=1;$
  \item $D_{n}^{p,q}(t,x)=\frac{[n]_{p,q} \, x + p^{n-2} \ q}{[n-1]_{p,q}},$ \ \mbox{for \ $n>1$};
  \item $D_{n}^{p,q}(t^{2},x) = \frac{x^{2} \, [n]_{p,q} \left( [n]_{p,q} + \frac{p^n}{q} \right)}{q \ [n-1]_{p,q} \ [n-2]_{p,q}} + \frac{x \, [n]_{p,q} \{p^{n-3} \ q^{2}+2 \, p^{n-2} \, q+p^{n-1}\} }{q \ [n-1]_{p,q} \ [n-2]_{p,q}} + \frac{p^{2n-5} \, q \, [2]_{p,q}}{[n-1]_{p,q} \ [n-2]_{p,q}},$ \ \mbox{for \ $n>2$}.
\end{enumerate}
\end{lemma}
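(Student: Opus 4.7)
The plan is to substitute $f=e_m$ for $m\in\{0,1,2\}$ directly into the definition (\ref{operatorBB}) and reduce each inner integral to a $(p,q)$-Beta value. Since $f(q^2 p^{n+k} t) = (q^2 p^{n+k})^m\, t^m$, the definition of $B_{p,q}$ yields
\begin{equation*}
\int_0^{\infty}\frac{t^{k+m}}{(1\oplus pt)_{p,q}^{n+k+1}}\,d_{p,q}t \;=\; B_{p,q}(k+m+1,\,n-m),
\end{equation*}
which requires $n>m$ -- precisely the constraints $n>1$ and $n>2$ appearing in parts $(2)$ and $(3)$. Consequently
\begin{equation*}
D_n^{p,q}(e_m,x)\;=\;\sum_{k=0}^{\infty}b_{n,k}^{p,q}(x)\,(q^2 p^{n+k})^m\,\frac{B_{p,q}(k+m+1,n-m)}{B_{p,q}(k+1,n)}.
\end{equation*}

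Next I would simplify this ratio via the Beta-Gamma relation recalled in the introduction. Since both numerator and denominator involve arguments summing to $n+k+1$, the factor $\Gamma_{p,q}(n+k+1)$ and the leading $q$ cancel, and the Gamma ratios collapse to $\prod_{j=1}^{m}[k+j]_{p,q}\big/\prod_{j=1}^{m}[n-j]_{p,q}$. A direct calculation of the half-integer exponents in $(p^{k+2}q^{k})^{(k+1)/2}/(p^{k+m+2}q^{k+m})^{(k+m+1)/2}$, combined with the $(q^2 p^{n+k})^m$ prefactor, absorbs all the $k$-dependent $p$-powers and leaves an overall coefficient $p^{n-2}q\cdot q^{-k}$ when $m=1$ and $p^{2n-5}q\cdot q^{-2k}$ when $m=2$, multiplying $\prod_{j=1}^{m}[k+j]_{p,q}$.

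Finally I would expand $\prod_{j=1}^{m}[k+j]_{p,q}$ using the identity $[k+j]_{p,q}=p\,[k+j-1]_{p,q}+q^{k+j-1}$ applied recursively, so that the summand becomes a $\mathbb{Z}[p,q]$-linear combination of $1$, $[k]_{p,q}/q^{k-1}$ and $[k]_{p,q}^{2}/q^{2(k-1)}$. Summing against $b_{n,k}^{p,q}(x)$ term by term, each of the three basic sums collapses to a Baskakov moment from Remark \ref{r1}, namely $B_{n,p,q}(e_i,x)$ for $i=0,1,2$. The case $m=0$ is immediate; the $m=1$ case reduces to a clean two-term split of $[k+1]_{p,q}$ giving the linear expression in $x$; and the $m=2$ case requires the three-term expansion of $[k+1]_{p,q}[k+2]_{p,q}$, which is the main obstacle. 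The difficulty there is not conceptual but rather the careful bookkeeping of $p,q$-exponents coming from three independent sources -- the Beta-ratio, the prefactor $(q^2 p^{n+k})^m$, and the rescaling between $q^{-k}$ and $q^{-(k-1)}$ needed to align with the arguments inside the Baskakov moments.
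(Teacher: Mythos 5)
Your proposal is correct and follows essentially the same route as the paper: reduce each inner integral to $(q^2p^{n+k})^mB_{p,q}(k+m+1,n-m)$, simplify the Beta ratio through the $(p,q)$-Beta--Gamma relation to obtain the coefficients $p^{n-2}q^{1-k}[k+1]_{p,q}/[n-1]_{p,q}$ and $p^{2n-5}q^{1-2k}[k+1]_{p,q}[k+2]_{p,q}/([n-1]_{p,q}[n-2]_{p,q})$, and then expand via $[k+1]_{p,q}=q^k+p[k]_{p,q}$ and $[k+2]_{p,q}=q^{k+1}+pq^k+p^2[k]_{p,q}$ so the sums collapse to the $(p,q)$-Baskakov moments of Remark \ref{r1}. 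The exponent bookkeeping you outline checks out against the paper's computation.
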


\begin{proof} By Remark \ref{r1}, we have
\begin{eqnarray*}
D_{n}^{p,q}(1,x) &=& \sum\limits_{k=0}^{\infty} b_{n,k}^{p,q}(x) \ \frac{1}{B_{p,q}(k+1,n)} \int\limits_{0}^{\infty} \frac{t^{k}}{(1 \oplus pt)_{p,q}^{n+k+1}} \ d_{p,q}t\\
&=& \sum\limits_{k=0}^{\infty} b_{n,k}^{p,q}(x) \ \frac{1}{B_{p,q}(k+1,n)} \ B_{p,q}(k+1,n) \\
&=& \sum\limits_{k=0}^{\infty} b_{n,k}^{p,q}(x) \\
&=& B_{n,p,q}\left( 1,x\right) = 1.
\end{eqnarray*}
Next, using $[k+1]_{p,q} = q^{k} +p[k]_{p,q},$ we have
\begin{eqnarray*}
D_{n}^{p,q}(t,x) &=& \sum\limits_{k=0}^{\infty} b_{n,k}^{p,q}(x) \ \frac{1}{B_{p,q}(k+1,n)} \int\limits_{0}^{\infty} \frac{t^{k+1} \, q^{2} \, p^{n+k}}{(1 \oplus pt)_{p,q}^{n+k+1}} \ d_{p,q}t \\
&=& \sum\limits_{k=0}^{\infty} b_{n,k}^{p,q}(x) \ \frac{1}{B_{p,q}(k+1,n)} \ q^{2} \, p^{n+k} \, B_{p,q}(k+2,n-1)\\
&=& \sum\limits_{k=0}^{\infty} b_{n,k}^{p,q}(x) \ \frac{\Gamma_{p,q}(n+k+1)}{q^{[2-(k+1)k]/2} \, p^{-(k+1)(k+2)/2} \, \Gamma_{p,q}(k+1) \, \Gamma_{p,q}(n) } \ q^{2} \, p^{n+k} \\ && \times \ q^{[2-(k+2)(k+1)]/2} \, p^{-(k+2)(k+3)/2} \ \frac{\Gamma_{p,q}(k+2) \, \Gamma_{p,q}(n-1)}{\Gamma_{p,q}(n+k+1)}  \\
&=& \sum\limits_{k=0}^{\infty} b_{n,k}^{p,q}(x) \ p^{n-2} \ q^{1-k} \ \frac{[k+1]_{p,q}}{[n-1]_{p,q}} \\
&=& \frac{1}{[n-1]_{p,q}} \sum\limits_{k=0}^{\infty} b_{n,k}^{p,q}(x) \ p^{n-2} \ q^{1-k} \  (q^{k} +p[k]_{p,q}) \\
&=& \frac{p^{n-2} \ q}{[n-1]_{p,q}} \sum\limits_{k=0}^{\infty} b_{n,k}^{p,q}(x) + \frac{[n]_{p,q}}{[n-1]_{p,q}} \ \sum\limits_{k=0}^{\infty} b_{n,k}^{p,q}(x) \ \frac{p^{n-1} \, [k]_{p,q}}{q^{k-1} \, [n]_{p,q} } \cdot
\end{eqnarray*}

Using Remark \ref{r1}, we have
\begin{eqnarray*}
D_{n}^{p,q}(t,x) &=& \frac{p^{n-2} \ q}{[n-1]_{p,q}} \ B_{n,p,q}(1,x) + \frac{[n]_{p,q}}{[n-1]_{p,q}} \ B_{n,p,q}(t,x) \\
&=& \frac{p^{n-2} \ q}{[n-1]_{p,q}} \cdot 1 + \frac{[n]_{p,q}}{[n-1]_{p,q}} \cdot x\\
&=& \frac{[n]_{p,q} \, x + p^{n-2} \ q}{[n-1]_{p,q}} \cdot
\end{eqnarray*}

Further, using the identity $[k+2]_{p,q}=q^{k+1}+p \, q^{k}+p^{2} \, [k]_{p,q},$ we get
\begin{eqnarray*}
D_{n}^{p,q}(t^{2},x) &=& \sum\limits_{k=0}^{\infty} b_{n,k}^{p,q}(x) \ \frac{1}{B_{p,q}(k+1,n)} \int\limits_{0}^{\infty} \frac{t^{k+2} \, q^{4} \, p^{2(n+k)}}{(1 \oplus pt)_{p,q}^{n+k+1}} \ d_{p,q}t \\
&=& \sum\limits_{k=0}^{\infty} b_{n,k}^{p,q}(x) \ \frac{1}{B_{p,q}(k+1,n)} \ q^{4} \, p^{2(n+k)} \ B_{p,q}(k+3,n-2) \\
&=& \sum\limits_{k=0}^{\infty} b_{n,k}^{p,q}(x) \ \frac{\Gamma_{p,q}(n+k+1)}{q^{[2-(k+1)k]/2} \ p^{-(k+1)(k+2)/2} \ \Gamma_{p,q}(k+1) \ \Gamma_{p,q}(n)} \ q^{4} \, p^{2(n+k)} \\
 && \times \ q^{[2-(k+3)(k+2)]/2} \ p^{-(k+3)(k+4)/2} \ \frac{\Gamma_{p,q}(k+3) \ \Gamma_{p,q}(n-2)}{\Gamma_{p,q}(n+k+1)} \\
&=& \sum\limits_{k=0}^{\infty} b_{n,k}^{p,q}(x) \ p^{2n-5} \ q^{1-2k} \ \frac{[k+2]_{p,q} \ [k+1]_{p,q}}{[n-1]_{p,q} \ [n-2]_{p,q}} \\
&=& \frac{1}{[n-1]_{p,q} \ [n-2]_{p,q}} \ \sum\limits_{k=0}^{\infty} b_{n,k}^{p,q}(x) \ p^{2n-5} \ q^{1-2k} \ \left(q^{k+1}+p \, q^{k}+p^{2} \, [k]_{p,q}\right) \\
&& \times \ \left(q^{k}+p \, [k]_{p,q}\right) \\
&=& \frac{p^{2n-5} \, q^{2}+p^{2n-4} \, q}{[n-1]_{p,q} \ [n-2]_{p,q}} \ \sum\limits_{k=0}^{\infty} b_{n,k}^{p,q}(x) \\
&&+ \frac{p^{n-3} \, q \, [n]_{p,q} }{[n-1]_{p,q} \ [n-2]_{p,q}} \ \sum\limits_{k=0}^{\infty} b_{n,k}^{p,q}(x) \ \frac{p^{n-1} \, [k]_{p,q}}{q^{k-1} \, [n]_{p,q}} \\
&& + \frac{2 \, p^{n-2} \, [n]_{p,q} }{[n-1]_{p,q} \ [n-2]_{p,q}} \ \sum\limits_{k=0}^{\infty} b_{n,k}^{p,q}(x) \ \frac{p^{n-1} \, [k]_{p,q}}{q^{k-1} \, [n]_{p,q}} \\
 && + \frac{[n]_{p,q}^{2}}{q \ [n-1]_{p,q} \ [n-2]_{p,q}} \ \sum\limits_{k=0}^{\infty} b_{n,k}^{p,q}(x) \ \frac{p^{2n-2} \, [k]_{p,q}^{2}}{q^{2k-2} \, [n]_{p,q}^{2}}\cdot
\end{eqnarray*}
Again, using Remark \ref{r1}
\begin{eqnarray*}
D_{n}^{p,q}(t^{2},x) &=& \frac{p^{2n-5} \, q^{2}+p^{2n-4} \, q}{[n-1]_{p,q} \ [n-2]_{p,q}} \ B_{n,p,q}(1,x) \, + \, \frac{p^{n-3} \, q \, [n]_{p,q} }{[n-1]_{p,q} \ [n-2]_{p,q}} \ B_{n,p,q}(t,x) \\
 &&+ \, \frac{2 \, p^{n-2} \, [n]_{p,q} }{[n-1]_{p,q} \ [n-2]_{p,q}} \ B_{n,p,q}(t,x) \, + \, \frac{[n]_{p,q}^{2}}{q \ [n-1]_{p,q} \ [n-2]_{p,q}} \ B_{n,p,q}(t^{2},x) \\
&=& \frac{p^{2n-5} \, q^{2}+p^{2n-4} \, q}{[n-1]_{p,q} \ [n-2]_{p,q}} + \frac{x \, [n]_{p,q} \{ p^{n-3} \, q + 2 \, p^{n-2} \} }{[n-1]_{p,q} \ [n-2]_{p,q}} \\
&& + \left\{ x^{2} + \frac{p^{n-1} \, x}{[n]_{p,q}} \ \left( 1+ \frac{p \, x}{q} \right) \right\} \ \frac{[n]_{p,q}^{2}}{q \ [n-1]_{p,q} \ [n-2]_{p,q}} \\
&=& \frac{x^{2} \, [n]_{p,q} \left( [n]_{p,q} + \frac{p^n}{q} \right)}{q \ [n-1]_{p,q} \ [n-2]_{p,q}} + \frac{x \, [n]_{p,q} \{p^{n-3} \, q^{2}+2 \, p^{n-2} \, q+p^{n-1}\} }{q \ [n-1]_{p,q} \ [n-2]_{p,q}}\\
 && + \frac{p^{2n-5} \, q \, [2]_{p,q}}{[n-1]_{p,q}\ [n-2]_{p,q}} \cdot
\end{eqnarray*}
\end{proof}
%--------------------------------------
\begin{remark} \label{cm}
Let $n>2$ and $x\in [0,\infty),$ then for $0<q<p\leqslant 1,$ we have the central moments as follows:
\begin{eqnarray*}
\mu_{n,1}^{p,q}(x) &:=& D_{n}^{p,q}((t-x),x) \\
&=& \frac{x \left( [n]_{p,q}-[n-1]_{p,q} \right)+ p^{n-2} \, q}{[n-1]_{p,q}}
\end{eqnarray*}
and
\begin{eqnarray*}
\mu_{n,2}^{p,q}(x) &:=& D_{n}^{p,q}((t-x)^{2},x) \\
   &=&  \frac{x^{2} \left\{ [n]_{p,q}\left( [n]_{p,q} + \frac{p^n}{q} \right) + q \, [n-1]_{p,q} \, [n-2]_{p,q} - 2 \, q \, [n]_{p,q} \, [n-2]_{p,q} \right\} }{q \, [n-1]_{p,q} \, [n-2]_{p,q}} \\
     &&+ \frac{x \{ [n]_{p,q} \left( p^{n-3} \, q^2 + 2 \, p^{n-2} \, q + p^{n-1} \right) - 2 \, p^{n-2} \, q^{2} \, [n-2]_{p,q} \} }{q \, [n-1]_{p,q} \, [n-2]_{p,q}} + \frac{[2]_{p,q} \, p^{2n-5} \, q}{[n-1]_{p,q} \, [n-2]_{p,q}} \cdot
\end{eqnarray*}
\end{remark}
%--------------------------------------
%---------------------------------------------
%---------------------------------------------
\section{Direct Estimations}

In this section, we prove direct results using two different approaches, i.e., K-functional and Steklov mean. We also represent the convergence of the $(p,q)$-Baskakov-Beta operators using the software MATLAB.\\

We denote the norm $||f||=\sup\limits_{x\in[0,\infty)}|f(x)|$ on $C_B[0,\infty)$, the class of real valued continuous bounded functions. For $f\in C_B[0,\infty)$ and $\delta >0$, the $m$-th order modulus of continuity is defined as
$$\omega_m(f,\delta)=\sup\limits_{0\leqslant h\leqslant \delta} \ \sup\limits_{x\in[0,\infty)}|\Delta^m_hf(x)|,$$
where $\Delta_{h}$ is the forward difference and $\Delta_{h}^{m}= \Delta_{h} \left( \Delta_{h}^{m-1} \right)$ for $m\geqslant 1.$ In case $m=1$, we mean the usual modulus of continuity denoted by
$\omega(f,\delta).$ \\
The Peetre's $K$-functional is defined as
$$K_2(f,\delta)=\inf\limits_{g\in C_B^2[0,\infty)}\left\{||f-g||+\delta||g^{\prime\prime}||:g\in C_B^2[0,\infty)\right\},$$
where $$C_B^2[0,\infty)=\{g\in C_B[0,\infty): g^\prime, g^{\prime\prime}\in C_B[0,\infty)\}.$$
%----------------------------
\begin{theorem}\label{t4} Let $f\in C_B[0,\infty)$ and $0<q<p\leqslant 1,$ then for every $x\in [0,\infty)$ and $n>2,$ the following inequality holds:
\begin{align*}
|D_{n}^{p,q}(f,x)-f(x)| &\leqslant \omega \left(f,|\mu_{n,1}^{p,q}(x)| \right) + C \ \omega_2\left(f,\sqrt{\mu_{n,2}^{p,q}(x) + \left( \mu_{n,1}^{p,q}(x)\right)^{2}} \, \right) , \\
\end{align*}
where $C$ is some positive constant.
\end{theorem}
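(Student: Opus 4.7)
The plan is to apply the classical Peetre $K$-functional technique, which first requires an auxiliary operator that reproduces linear functions. Define
$$\widetilde{D}_n^{p,q}(f,x) := D_n^{p,q}(f,x) + f(x) - f\bigl(x + \mu_{n,1}^{p,q}(x)\bigr),$$
so that Lemma \ref{mom} and Remark \ref{cm} give $\widetilde{D}_n^{p,q}(1,x) = 1$ and $\widetilde{D}_n^{p,q}(t-x,x) = 0$. Since $D_n^{p,q}$ is a positive linear operator with $D_n^{p,q}(1,x) = 1$, the uniform bound $|\widetilde{D}_n^{p,q}(h,x)| \leq 3\,\|h\|$ follows for every $h \in C_B[0,\infty)$.

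For $g \in C_B^2[0,\infty)$, I would expand $g$ at $x$ via Taylor with integral remainder, $g(t) = g(x) + (t-x)\,g'(x) + \int_x^t (t-u)\,g''(u)\,du$, and apply $\widetilde{D}_n^{p,q}$ in the variable $t$. The linear term vanishes by moment preservation. Because $\widetilde{D}_n^{p,q}$ is not itself positive, I would split it into its $D_n^{p,q}$ part and the point evaluation at $x+\mu_{n,1}^{p,q}(x)$, bounding the remainder pointwise by $|R(t)| \leq \tfrac{1}{2}\|g''\|(t-x)^{2}$. Positivity of $D_n^{p,q}$ then yields $\tfrac{1}{2}\|g''\|\,\mu_{n,2}^{p,q}(x)$ from the operator piece, while the shifted evaluation contributes $\tfrac{1}{2}\|g''\|(\mu_{n,1}^{p,q}(x))^{2}$, producing
$$\bigl|\widetilde{D}_n^{p,q}(g,x) - g(x)\bigr| \leq \tfrac{1}{2}\|g''\|\bigl[\mu_{n,2}^{p,q}(x) + (\mu_{n,1}^{p,q}(x))^{2}\bigr].$$
This is precisely where the combination appearing under the square root in the theorem emerges.

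For arbitrary $f \in C_B[0,\infty)$ and $g \in C_B^2[0,\infty)$, the decomposition
$$D_n^{p,q}(f,x) - f(x) = \widetilde{D}_n^{p,q}(f-g,x) + (g-f)(x) + \bigl[\widetilde{D}_n^{p,q}(g,x) - g(x)\bigr] + \bigl[f(x+\mu_{n,1}^{p,q}(x)) - f(x)\bigr]$$
controls the first two summands by $4\|f-g\|$, the third by the previous display, and the last by $\omega(f,|\mu_{n,1}^{p,q}(x)|)$ directly from the definition of the modulus of continuity. Taking the infimum over $g$ yields a constant multiple of $K_2\bigl(f,\tfrac{1}{8}[\mu_{n,2}^{p,q}(x)+(\mu_{n,1}^{p,q}(x))^{2}]\bigr)$, and the classical DeVore--Lorentz equivalence $K_2(f,\delta^{2}) \leq C\,\omega_2(f,\delta)$ together with the scaling property of $\omega_2$ converts this into the stated $\omega_2$-bound, with all numerical factors absorbed into $C$.

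The only substantive point to verify is the identity for $\widetilde{D}_n^{p,q}((t-x)^{2},x)$; the remaining arguments are standard bookkeeping. The main subtlety worth flagging is the loss of positivity in passing from $D_n^{p,q}$ to $\widetilde{D}_n^{p,q}$, which forces the split treatment of the Taylor remainder described above.
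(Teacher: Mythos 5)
Your proposal is correct and follows essentially the same route as the paper: your auxiliary operator $\widetilde{D}_n^{p,q}$ coincides with the paper's $\check{D}_n^{p,q}$ (since $x+\mu_{n,1}^{p,q}(x)=\frac{[n]_{p,q}\,x+p^{n-2}q}{[n-1]_{p,q}}$), and the Taylor expansion, the $4\|f-g\|$ bound, the separate $\omega(f,|\mu_{n,1}^{p,q}(x)|)$ term for the shifted evaluation, and the DeVore--Lorentz inequality $K_2(f,\delta)\leqslant C\,\omega_2(f,\sqrt{\delta})$ all match the paper's argument. The only differences are cosmetic (you retain the factor $\tfrac12$ from the Taylor remainder, which the paper drops).
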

%----------------------------
%----------------------------
\begin{proof}
Consider the following operator:
\begin{eqnarray} \label{check-operator}
\check{D}_{n}^{p,q}(f,x) = D_{n}^{p,q}(f,x) - f \left( \frac{[n]_{p,q} \, x + p^{n-2} \ q}{[n-1]_{p,q}} \right) + f(x), \ \ x\in [0,\infty)
\end{eqnarray}
Let $g\in C_B^2[0,\infty)$
and $x, \, t \in[0,\infty).$ By Taylor's expansion, we have
\begin{align*}
g(t) = g(x)+(t-x) \ g^\prime(x)+\int\limits_{x}^{t}(t-u) \ g^{\prime\prime}(u) \ du,
\end{align*}
Applying $\check{D}_{n}^{p,q},$ we get
\[ \check{D}_{n}^{p,q}(g,x)-g(x) = g^{\prime}(x) \, \check{D}_{n}^{p,q} \left((t-x),x \right) + \check{D}_{n}^{p,q} \left( \int\limits_{x}^{t}(t-u) \ g^{\prime\prime}(u) \ du ,x \right). \]
Hence,
\begin{align*}
& | \check{D}_{n}^{p,q}(g,x)-g(x)| \\
&\leqslant \bigg(D_{n}^{p,q} \bigg|\int\limits_{x}^{t}|t-u| \ |g^{\prime\prime}(u)| \ du\bigg|, \, x\bigg) + \bigg| \int\limits_{x}^{\frac{[n]_{p,q} \, x + p^{n-2} \ q}{[n-1]_{p,q}}}\left(\frac{[n]_{p,q} \, x + p^{n-2} \ q}{[n-1]_{p,q}}- u\right) g^{\prime\prime} (u) \ du \bigg| \\
&\leqslant D_{n}^{p,q}((t-x)^2, \, x) \ \|g^{\prime\prime}\| + \bigg| \int\limits_{x}^{\frac{[n]_{p,q} \, x + p^{n-2} \ q}{[n-1]_{p,q}}} \left(\frac{[n]_{p,q} \, x + p^{n-2} \ q}{[n-1]_{p,q}}- u\right) \ g^{\prime\prime}(u) \ du \bigg| \\
&\leqslant  \left\{ \mu_{n,2}^{p,q}(x) + \left(\frac{x \, \left( [n]_{p,q} - [n-1]_{p,q} \right)+ p^{n-2} \, q }{[n-1]_{p,q}}\right)^{2} \right\}  \  \|g^{\prime\prime}\| . \\
\end{align*}

Now, using operators (\ref{operatorBB}), we have, by Lemma \ref{mom},
\begin{eqnarray*}
|D_{n}^{p,q}(f,x)| &\leqslant& \sum\limits_{k=0}^{\infty} b_{n,k}^{p,q}(x) \ \frac{1}{B_{p,q}(k+1,n)} \int\limits_{0}^{\infty} \frac{t^{k}}{(1 \oplus pt)_{p,q}^{n+k+1}} \ |f(q^{2}p^{n+k}t)| \ d_{p,q}t.
\end{eqnarray*}
Hence, by (\ref{check-operator}), $ |D_{n}^{p,q}(f,x)| \leqslant 3 \, \|f\|.$ \\
Therefore
\begin{align*}
&|D_{n}^{p,q}(f,x)-f(x)| \\
 &\leqslant |\check{D}_{n}^{p,q}(f-g,x)-(f-g)(x)|+|\check{D}_{n}^{p,q}(g,x)-g(x)| + \bigg| f\left(\frac{[n]_{p,q} \, x + p^{n-2} \ q}{[n-1]_{p,q}}\right) - f(x) \bigg| \\
&\leqslant 4 \ \|f-g\|+ \left\{\mu_{n,2}^{p,q}(x) + \left( \frac{x \, \left( [n]_{p,q} - [n-1]_{p,q} \right) + p^{n-2} \, q }{[n-1]_{p,q}}\right)^{2} \right\}   \  \|g^{\prime\prime}\| \\
& \hskip0.2in + \omega \left( f, \frac{|x \, \left( [n]_{p,q} - [n-1]_{p,q} \right) + p^{n-2} \, q|}{[n-1]_{p,q}} \right).
\end{align*}
Lastly, taking infimum over all $g\in C_B^2[0,\infty)$, and using the inequality $K_2(f,\delta) \leqslant
C \, \omega_2(f,\sqrt{\delta})$, $\delta >0$ due to \cite{dl}, we get the desired assertion.
\end{proof}
%------------------------------------
%------------------------------------
\begin{example}
We show comparisons and some illustrative graphs for the convergence of $(p,q)$-analogue of Baskakov-Beta operators $D_{n}^{p,q}(f,x)$ for different values of the parameters $p$ and $q,$ such that $0< q < p \leqslant 1$.

For $x\in [0,\infty)$, $p = 0.9$ and $q = 0.8$, the convergence of the operators $D_{n}^{p,q}(f,x)$ to the function $f$, where $f(x)=18x^2-12x+2015$, for different values of $n$ is illustrated using MATLAB.
\begin{figure}[H]
\includegraphics[width=1.1\textwidth]{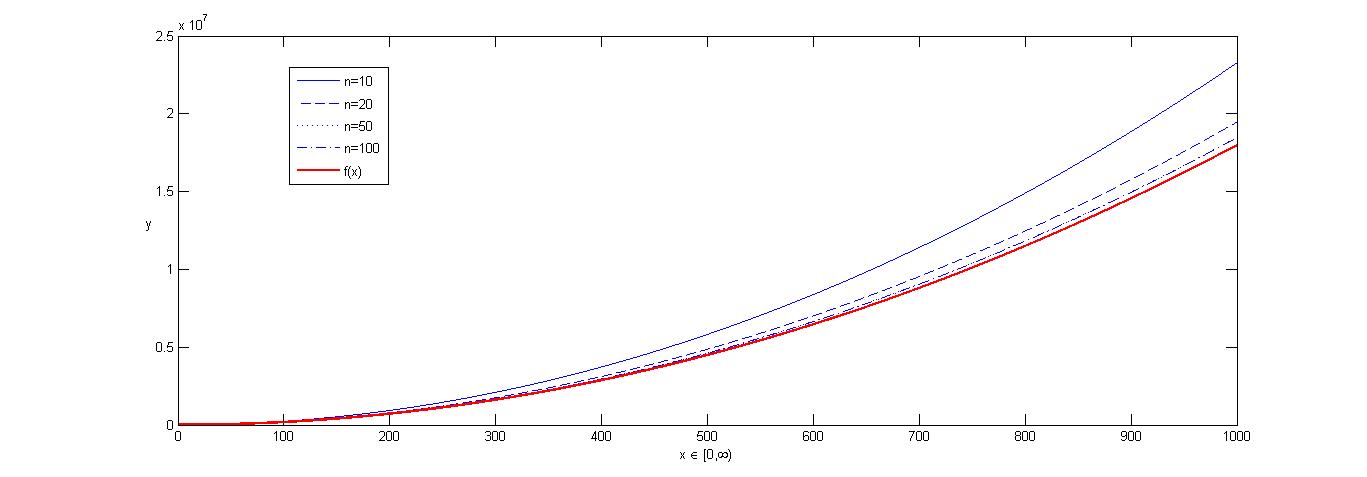}
\caption[]{$D_{n}^{0.9,0.8}(f,x)$ for $x\in[0,\infty)$, when $f(x)=18x^2-12x+2015.$}\label{Fig1}
\end{figure}
\end{example}
%------------------------------------
\begin{example}
For $x\in [0,\infty)$, $p = 0.9$ and $q = 0.75$, the convergence of the operators $D_{n}^{p,q}(f,x)$ to the function $f$, where $f(x)=25x^2-2x+7$, for different values of $n$ is illustrated using MATLAB.
\begin{figure}[H]
\includegraphics[width=1.1\textwidth]{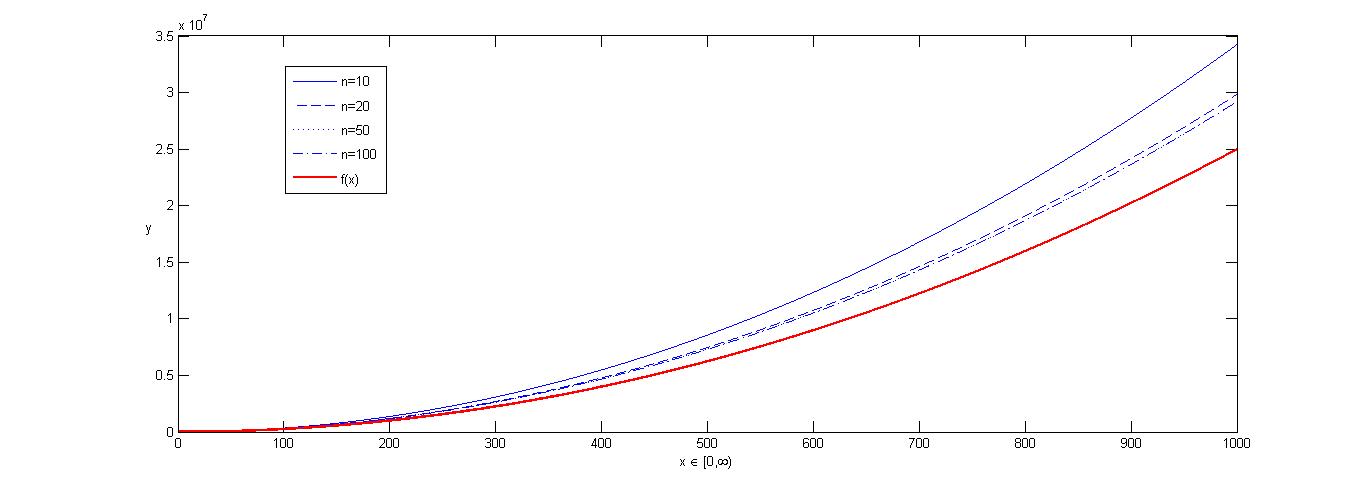}
\caption[]{$D_{n}^{0.9,0.75}(f,x)$ for $x\in[0,\infty)$, when $f(x)=25x^2-2x+7.$}\label{Fig3}
\end{figure}
\end{example}
%-------------------------------------
%\section{Weighted Approximation}
%----- Weighted Approximation -------
%------------------------------------
%--------------------------------
Let $B_{\sigma}[0,\infty)$ be the space of all real valued functions on $[0,\infty)$ satisfying the condition $|f(x)|\leqslant C_f \ \sigma(x)$, where $C_f>0$ and $\sigma(x)$ is a weight function. Let $C_\sigma[0,\infty)$ be the space of all continuous functions in $B_\sigma[0,\infty)$ with the norm $\|f\|_\sigma=\sup\limits_{x\in[0,\infty)}\frac{|f(x)|}{\sigma(x)}$ and $C_\sigma^{0}[0,\infty) = \left\{f\in C_\sigma[0,\infty) : \lim\limits_{x\rightarrow \infty }\frac{|f(x)|}{\sigma(x)}<\infty\right\}$.
We consider $\sigma(x)= (1+x^{2})$ in the following two results. Also, we denote the modulus of continuity on $f$ on the closed interval $[0,\kappa],$ $\kappa >0$ by $$ \omega_{\kappa}(f,\delta) = \sup\limits{|t-x| \leqslant \delta} \ \sup\limits_{x,t \in [0, \kappa]} |f(t) - f(x)|.$$ It is easy to see that for $f \in C_\sigma[0,\infty),$ the modulus of continuity $\omega_{\kappa}(f,\delta)$ tends to zero.
Following is a theorem on the rate of convergence for the operators $D_{n}^{p,q}(f,x).$
%-----------------------------
\begin{theorem}
Let $f\in C_\sigma[0,\infty),$ $0<q<p \leqslant 1$ and $\omega_{\kappa +1}(f,\delta)$ be its modulus of continuity on the finite interval $[0, \kappa +1] \subset [0, \infty),$ where $\kappa > 0.$ Then, for every $n >2,$ $$ ||D_{n}^{p,q}(f) - f||_{C[0,\kappa]} \leqslant L \ \mu_{n,2}^{p,q}(x) + \omega_{\kappa +1} + 2 \, \omega \left( f, \sqrt{L \ \mu_{n,2}^{p,q}(x)} \, \right),$$ where $L=6 \ C_{f} \ (1+\kappa^{2}) (1+\kappa+\kappa^{2}).$
\end{theorem}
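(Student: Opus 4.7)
The plan is to adapt the classical Gadzhiev--Ibikli scheme for weighted-space rate of convergence: for fixed $x \in [0,\kappa]$, I split the diagonal difference $|f(t)-f(x)|$ according to whether $t$ lies in the enlarged compact interval $[0,\kappa+1]$ or outside it, then apply $D_{n}^{p,q}(\cdot,x)$ termwise and invoke the moment estimates already established in Lemma~\ref{mom} and Remark~\ref{cm}.

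First, for $t \in [0,\kappa+1]$ both $t$ and $x$ lie in the same compact interval, so the standard property of the modulus of continuity gives, for every $\delta>0$,
$|f(t)-f(x)| \leq \omega_{\kappa+1}(f,|t-x|) \leq \bigl(1 + |t-x|/\delta\bigr)\omega_{\kappa+1}(f,\delta).$
Second, for $t > \kappa+1$ one has $|t-x|>1$, and the growth condition $|f(u)| \leq C_f(1+u^2)$ yields $|f(t)-f(x)| \leq C_f[(1+x^2)+(1+t^2)]$. Using $t^2 \leq 2(t-x)^2 + 2x^2$ together with $x \leq \kappa$ and the crucial inequality $1 \leq (t-x)^2$, every constant and $x$-dependent contribution upgrades to a multiple of $(t-x)^2$, producing $|f(t)-f(x)| \leq L(t-x)^2$ with $L = 6C_f(1+\kappa^2)(1+\kappa+\kappa^2)$.

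Adding the two case-bounds gives the uniform estimate
$|f(t)-f(x)| \leq \bigl(1+|t-x|/\delta\bigr)\omega_{\kappa+1}(f,\delta) + L(t-x)^2$
valid for all $t,x \geq 0$. Applying $D_{n}^{p,q}(\cdot,x)$, using $D_{n}^{p,q}(1,x)=1$, $D_{n}^{p,q}((t-x)^2,x) = \mu_{n,2}^{p,q}(x)$, and the Cauchy--Schwarz inequality $D_{n}^{p,q}(|t-x|,x) \leq \sqrt{\mu_{n,2}^{p,q}(x)}$, I obtain
$|D_{n}^{p,q}(f,x)-f(x)| \leq L\,\mu_{n,2}^{p,q}(x) + \omega_{\kappa+1}(f,\delta)\bigl(1 + \delta^{-1}\sqrt{\mu_{n,2}^{p,q}(x)}\bigr).$
Choosing $\delta = \sqrt{L\,\mu_{n,2}^{p,q}(x)}$ collapses the last bracket to $2$ and, after taking the supremum over $x \in [0,\kappa]$, delivers the announced inequality.

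The main obstacle is purely constant-tracking: the naive bookkeeping in the ``far'' case produces the bound $C_f(4+3\kappa^2)(t-x)^2$, which must then be dominated by $L = 6C_f(1+\kappa^2)(1+\kappa+\kappa^2)$ via the trivial polynomial inequality $4+3\kappa^2 \leq 6(1+\kappa^2)(1+\kappa+\kappa^2)$. Everything else reduces to linearity of $D_{n}^{p,q}$, a single application of Cauchy--Schwarz, and the moment formulas already proved; no new analytic input is needed.
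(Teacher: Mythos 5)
Your proposal is correct and follows essentially the same route as the paper: split $|f(t)-f(x)|$ at $t=\kappa+1$, bound the far part by $L(t-x)^2$ via the growth condition and $(t-x)^2>1$, bound the near part by the modulus of continuity, apply the operator with Cauchy--Schwarz on $D_{n}^{p,q}(|t-x|,x)$, and choose $\delta=\sqrt{L\,\mu_{n,2}^{p,q}(x)}$. Your constant-tracking is in fact slightly more careful than the paper's own display, which contains a typographical slip in where the square root is placed, but the argument is the same.
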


\begin{proof}
For $x\in [0, \kappa]$ and $t> \kappa +1,$ since $t-x >1,$ therefore we have
\begin{eqnarray} \label{et1}
|f(t)-f(x)| \leqslant C_{f} \ (2+ x^2 + t^2) \leqslant (2 + 3x^2 + 2(t - x)^2) \leqslant 6 \ C_f \ (1 + \kappa^2)(t - x)^2.
\end{eqnarray}
And for $x\in [0, \kappa]$ and $t \leqslant \kappa +1,$ we have
\begin{eqnarray} \label{et2}
|f(t) - f(x)| \leqslant \omega_{\kappa +1}(f, |t - x|) \leqslant \left(1 + \frac{|t -x|}{\delta} \right) \omega_{\kappa +1}(f, \delta),
\end{eqnarray}
with $\delta >0.$
Using above two equations (\ref{et1}) and (\ref{et2}),
\begin{eqnarray*}
|f(t) - f(x)| \leqslant 6 \ C_f \ (1 + \kappa^2) (t - x)^2 + \left(1 + \frac{|t -x|}{\delta} \right) \omega_{\kappa +1}(f, \delta),
\end{eqnarray*}
for $t \geqslant 0.$ \\
Now,
\begin{eqnarray*}
|D_{n}^{p,q}(f, x) - f(x)| &\leqslant& D_{n}^{p,q}(|f(t) - f(x)|, x) \\
&\leqslant& 6 \ C_f \ (1+ \kappa^2) D_{n}^{p,q}((t - x)^2, x) + \omega_{\kappa +1}(f, \delta) \ \left(1+\frac{1}{\delta} \, D_{n}^{p,q}((t - x)^2, x) \right)^{1/2}
\end{eqnarray*}
Using Remark \ref{cm} and Schwarz's inequality, for every $0<q<p \leqslant 1$ and $x\in [0, \kappa],$ we have
\begin{eqnarray*}
|D_{n}^{p,q}(f, x) - f(x)| &\leqslant& 6 \ C_f \ (1+ \kappa^2) \ \mu_{n,2}^{p,q}(x) + \omega_{\kappa +1}(f, \delta) \ \left(1+\frac{1}{\delta} \, \sqrt{\mu_{n,2}^{p,q}(x)} \right) \\
&\leqslant& L \ \mu_{n,2}^{p,q}(x)+ \omega_{\kappa +1}(f, \delta) \ \left(1+\frac{1}{\delta} \sqrt{\mu_{n,2}^{p,q}(x)} \right).
\end{eqnarray*}
Taking $\delta = \sqrt{L \ \mu_{n,2}^{p,q}(x)},$ the conclusion holds.
\end{proof}
%----------------------------
%-----Theorem -------------
Following is a direct estimate in weighted approximation.
%----------------------------
\begin{theorem}
Let $p=p_{n},$ $q=q_{n}$ satisfying $0<q_{n}<p_{n}\leqslant 1$ and $p_{n} \to 1,$ $q_{n} \to 1,$ $p_{n}^{n} \to a,$ $q_{n}^{n} \to b$ as $n \to \infty.$ Then, for $f\in C_\sigma^{0}[0,\infty),$ we have
\[ \lim\limits_{n\rightarrow \infty}\|D_{n}^{p,q}\left(f\right)-f\|_\sigma = 0. \]
\end{theorem}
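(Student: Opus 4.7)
The plan is to invoke the weighted Korovkin theorem of A.~D.~Gadjiev: for a sequence of positive linear operators $L_n$ on $C_\sigma[0,\infty)$ with $\sigma(x)=1+x^2$, if $\|L_n(e_i) - e_i\|_\sigma \to 0$ for $i=0,1,2$, then $\|L_n(f) - f\|_\sigma \to 0$ for every $f \in C_\sigma^0[0,\infty)$. The operators $D_n^{p,q}$ are manifestly positive and linear, so the problem reduces to verifying the three test-function conditions for the sequence $(p_n, q_n)$.

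Part (1) of Lemma \ref{mom} handles $i=0$ immediately. For $i=1$, part (2) of Lemma \ref{mom} yields
\[ D_n^{p,q}(e_1,x) - x = \frac{\bigl([n]_{p,q} - [n-1]_{p,q}\bigr)\, x + p^{n-2}\,q}{[n-1]_{p,q}}. \]
Dividing by $1+x^2$ and using the elementary bounds $x/(1+x^2) \leq 1/2$ and $1/(1+x^2) \leq 1$, the weighted norm is controlled by
\[ \|D_n^{p_n,q_n}(e_1) - e_1\|_\sigma \leq \frac{1}{2}\left|\frac{[n]_{p_n,q_n}}{[n-1]_{p_n,q_n}} - 1\right| + \frac{p_n^{n-2}\,q_n}{[n-1]_{p_n,q_n}}. \]
The recursive identity $[n]_{p,q} = q\,[n-1]_{p,q} + p^{n-1}$, read off directly from the defining sum, rewrites the first ratio as $q_n + p_n^{n-1}/[n-1]_{p_n,q_n}$, which tends to $1$ as soon as $q_n \to 1$ and $p_n^{n-1}/[n-1]_{p_n,q_n} \to 0$.

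For $i=2$, part (3) of Lemma \ref{mom} lets one write $D_n^{p,q}(e_2,x) - x^2 = (A_n - 1)\,x^2 + B_n\,x + C_n$ with explicit coefficients $A_n, B_n, C_n$ built from $[n]_{p,q}, [n-1]_{p,q}, [n-2]_{p,q}$ and small powers of $p, q$. The weighted norm is then majorized by $|A_n - 1| + |B_n|/2 + |C_n|$. Iterating the recursion above gives $[n]_{p_n,q_n}/[n-1]_{p_n,q_n} \to 1$ and $[n-1]_{p_n,q_n}/[n-2]_{p_n,q_n} \to 1$, whence $A_n \to 1$, while $B_n$ and $C_n$ each carry an additional factor of type $p^{n-j}/[n-j]_{p,q}$ and therefore vanish in the limit. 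Together with the case $i=1$, Gadjiev's theorem then delivers the conclusion.

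The main obstacle is qualitative rather than computational: one must show $[n-j]_{p_n,q_n} \to \infty$ for $j = 1, 2$ under the stated limit hypothesis, since this is exactly what drives every ratio $p_n^{n-j}/[n-j]_{p_n,q_n}$ to zero. Writing $[n]_{p_n,q_n} = \sum_{k=0}^{n-1} p_n^{n-1-k} q_n^{k}$ as a sum of $n$ positive terms and bounding each term below by $\min(p_n^{n-1}, q_n^{n-1})$, which remains bounded away from zero when the target values $a, b$ of $p_n^n, q_n^n$ are strictly positive, the divergence follows. Once that asymptotic fact is recorded, the remaining coefficient estimates are routine arithmetic, and the conclusion follows from Gadjiev's criterion.
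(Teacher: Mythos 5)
Your proposal is correct and follows essentially the same route as the paper: both reduce the claim to Gadjiev's weighted Korovkin criterion for the test functions $e_0,e_1,e_2$ and then read off the required limits from the moment formulas of Lemma \ref{mom}. If anything, you are more careful than the paper, which merely displays the weighted norms and asserts they vanish; your explicit use of $[n]_{p,q}=q\,[n-1]_{p,q}+p^{n-1}$ and your observation that $[n]_{p_n,q_n}\to\infty$ needs $b>0$ (a hypothesis the paper only smuggles in via a later remark asserting $1/[n]_{p_n,q_n}\to 0$) supply exactly the details the paper omits.
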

%----------------------------
%-- Theorem proof
%----------------------------
\begin{proof}
Due to the well-known Bohman-Korovkin theorem in \cite{korovkin}, it is sufficient to verify the following equation holds:
\begin{align*}
\lim_{n\rightarrow \infty}\|D_{n}^{p,q}\left(t^k,x\right)-x^k\|_\sigma = 0, \ \mbox{for} \ \text{k = 0, 1, 2.}
\end{align*}
By Lemma \ref{mom}, the result immediately follows for $k = 0.$ \\
Again, by Lemma \ref{mom}, we have:
\begin{eqnarray*}
\hskip-0.2in \|D_{n}^{p,q}(t,x)-x \|_\sigma &=& \sup_{x\in[0,\infty)} \bigg|\frac{[n]_{p,q} \, x + p^{n-2} \ q}{[n-1]_{p,q}}-x \bigg| \ \frac{1}{(1+x^2)}\\
&=& \sup_{x\in[0,\infty)} \left| \left\{\frac{x \left( [n]_{p,q}-[n-1]_{p,q} \right)+ p^{n-2} \, q}{[n-1]_{p,q}}\right\} \right| \ \frac{1}{(1+x^2)}
\end{eqnarray*}
and
\begin{eqnarray*}
\hskip-0.2in \|D_{n}^{p,q}(t^2,x)-x^2 \|_\sigma &=& \sup_{x\in[0,\infty)} \bigg| \frac{x^{2} \left\{ [n]_{p,q}\left( [n]_{p,q} + \frac{p^n}{q} \right) + q \, [n-1]_{p,q} \, [n-2]_{p,q} - 2 \, q \, [n]_{p,q} \, [n-2]_{p,q} \right\} }{q \, [n-1]_{p,q} \, [n-2]_{p,q}} \\
     && \hskip0.5in + \frac{x \left\{ [n]_{p,q} \left( p^{n-3} \, q^2 + 2 \, p^{n-2} \, q + p^{n-1} \right) - 2 \, p^{n-2} \, q^{2} \, [n-2]_{p,q} \right\} }{q \, [n-1]_{p,q} \, [n-2]_{p,q}} \\
     && \hskip0.5in+ \frac{[2]_{p,q} \, p^{2n-5} \, q}{[n-1]_{p,q} \, [n-2]_{p,q}}-x^2 \bigg| \ \frac{1}{(1+x^2)}\\
&=& \sup_{x\in[0,\infty)} \bigg| \frac{x^{2} \left\{ [n]_{p,q}\left( [n]_{p,q} + \frac{p^n}{q} \right) - 2 \, q \, [n]_{p,q} \, [n-2]_{p,q} \right\} }{q \, [n-1]_{p,q} \, [n-2]_{p,q}} \\
     && \hskip0.5in + \frac{x \left\{ [n]_{p,q} \left( p^{n-3} \, q^2 + 2 \, p^{n-2} \, q + p^{n-1} \right) - 2 \, p^{n-2} \, q^{2} \, [n-2]_{p,q} \right\} }{q \, [n-1]_{p,q} \, [n-2]_{p,q}} \\
     && \hskip0.5in+ \frac{[2]_{p,q} \, p^{2n-5} \, q}{[n-1]_{p,q} \, [n-2]_{p,q}} \bigg| \ \frac{1}{(1+x^2)},
\end{eqnarray*}
which shows that the result holds for $k=1$ and $k=2$ as well.\\
This completes the proof of the theorem.
\end{proof}
%----------------------------------------------
\begin{remark}
In order to obtain convergence estimates in the theorems discussed in the present article, we consider $0<q_{n} < p_{n} \leqslant 1,$ $p=p_{n}$ and $q=q_{n},$ such that $\lim\limits_{n \to \infty} p_{n} = 1 = \lim\limits_{n \to \infty} q_{n}$ and $\lim\limits_{n \to \infty} p_{n}^{n} = a,$ $\lim\limits_{n \to \infty} q_{n}^{n} = b.$ Thus, we have $\lim\limits_{n \to \infty} \frac{1}{[n]_{p_{n},q_{n}}} = 0.$
\end{remark}
%--------------------------------------------------
\begin{remark}
The Srivastava-Gupta operators came into existence in the year 2003 (cf. \cite{GS}). Several further generalizations of these well-known operators were discussed (cf. \cite{ispir}, \cite{neha}, \cite{durvesh} and \cite{rani}). The $(p,q)$-variant of these operators cannot be defined for general form. Although, one can define the $(p,q)$-extensions for special cases of these operators separately. This leads to an open problem for our readers.
\end{remark}

{\bf Acknowledgement.} The authors are grateful to Prof. G. V. Milovanovi$\acute{c}$ for valuable discussions concerning the $(p,q)$-integral and their properties.
%----------------------------------------------
%%%%%%%% Bibliography %%%%%%%%%%%%%%%%%%%%%%%%%%%%%%%%%%%%%%%%%%%%%%%%%

%\textit{Bibliography.}

\end{document}